\theoremstyle{thmstyleone}
\newtheorem{theorem}{Theorem}[section]
\newtheorem{proposition}[theorem]{Proposition}
\newtheorem{conclusion}{Conclusion}
\newtheorem{remark}[theorem]{Remark}
\theoremstyle{thmstyletwo}
\newtheorem{example}[theorem]{Example}
\theoremstyle{thmstylethree}
\newtheorem{definition}{Definition}[section]
\def\r{\mathbb R}
\def\E{\mathbb E}
\def\t{\mathbf t}
\def\n{\mathbf n}
\def\b{\mathbf b}
\def\x{\mathbf x}
\def\y{\mathbf y}
\def\u{\mathbf u}
\def\v{\mathbf v}
\begin{document}

\title[Multiplicative partner curves in $\E_*^3$]{\Huge A non-Newtonian some partner curves in multiplicative Euclidean space $\E_*^3$}

\author*[1]{\fnm{Aykut} \sur{Has}}\email{ahas@ksu.edu.tr}

\author[2]{\fnm{Beyhan} \sur{Yılmaz}}\email{beyhanyilmaz@ksu.edu.tr}
\equalcont{These authors contributed equally to this work.}

%\author[3]{\fnm{Third} \sur{Author}}\email{iiiauthor@gmail.com}
%\equalcont{These authors contributed equally to this work.}

\affil[1,2]{\orgdiv{Department of Mathematics, Faculty of Science}, \orgname{Kahramanmaras Sutcu Imam University}, \orgaddress{ \city{Kahramanmaras}, \postcode{46100}, \country{Turkey}}}

%\affil*[1]{\orgdiv{Department of Mathematics, Faculty of Science}, \orgname{Kahramanmaras Sutcu Imam University}, \orgaddress{ \city{Kahramanmaras}, \postcode{46100}, \country{Turkey}}}

%\affil[3]{\orgdiv{Department}, \orgname{Organization}, \orgaddress{\street{Street}, \city{City}, \postcode{610101}, \state{State}, \country{Country}}}

%%==================================%%
%% Sample for unstructured abstract %%
%%==================================%%

\abstract{The aim of this article is to characterize pairs of curves within multiplicative (non-Newtonian) spaces. Specifically, we investigate how famous curve pairs such as Bertrand partner curves, Mannheim partner curves, which are prominent in differential geometry, are transformed under the influence of multiplicative analysis. By leveraging the relationships between multiplicative Frenet vectors, we introduce multiplicative versions of Bertrand, Mannheim curve pairs. Subsequently, we characterize these curve pairs using multiplicative arguments. Examples are provided, and multiplicative graphs are presented to enhance understanding of the subject matter. Through this analysis, we aim to elucidate the behavior and properties of these curve pairs within the context of multiplicative geometry.}
\keywords{non-Newtonian calculus, special partner curves, multiplicative Frenet frame, multiplicative Euclidean space.}
\pacs[MSC Classification]{ 53A04, 11U10, 08A05}
\maketitle

\section{Introduction}\label{sec1}
A curve can be simply described as the path traced by a point moving in space. As a regular curve traverses its path, it generates a tangent vector field at each point along its trajectory. With the help of this tangent vector field, the principal normal vector field and binormal vector field of the curve are created.  Consequently, these three vector fields collectively form the structure known as the Frenet vectors of the curve \cite{carmo}. Utilizing these Frenet vectors, the curvature and torsion of the curve can be quantified. Categorizing curves based on certain characteristics is a valuable approach in differential geometry and the Frenet apparatus serves as a useful tool for this purpose. For instance, curves whose principal normals are linearly dependent is called Bertrand curve pairs, those whose principal normal and binormal are linearly dependent are referred to as Mannheim curve pairs and curves with orthogonal tangents are known as involute-evolute curve pairs \cite{bertrand,mannheim,boyer}. As can be understood from here, the most important building block of the curve can be said to be its tangent. Because all other concepts can be defined with the help of the tangent of the curve. Singular points, where the curve lacks a defined tangent, have been a subject of recent research interest \cite{hilal,sernesi,takahashi}. Has A. et al. created alternative tangents with the help of conformable derivative at points where the traditional derivative was not defined. Thus, they provided a solution to define an alternative Frenet frame for a curve that does not have a conventional derivative at any point \cite{aykut1,aykut2}. Similarly, Evren M. E. et al. tried to construct differential geometry with the help of multiplicative analysis. In this study, researchers calculated the arc length, which could not be calculated by traditional methods, with multiplicative arguments and solved this problem \cite{evren}. In this respect, it can be said that there are advantages to using different analyzes in differential geometry \cite{aykut3,aykut4,zehra,mert}.

Classical analysis, a mathematical theory widely used today, is discovered by Leibniz G. and Newton I. in the second half of the 17th century, based on the concepts of derivative and integral, and is also referred to as Newtonian analysis. In addition, it can be said that this structure, which uses the unit function as the generator function, forms the basis of all mathematics. This approach measures change by considering infinitesimally small variations in quantities and it employs the unit function as the generator function, forming the cornerstone of mathematical theories. Volterra V. sought to introduce alternative analyses to Newtonian analysis by modifying the generator functions and basic operations (addition, subtraction, multiplication and division) \cite{volterra}. Building on Volterra V. work, Grossman M. and Kantz R. further expanded this framework and introduced various types of analysis such as geometric, bigeometric, and anageometric calculus \cite{grossman,grossman2}. These non-Newtonian or multiplicative analyses use the exponential function as the generator function, where multiplication assumes the role of addition in Newtonian analysis. This allows for changes to be measured not only through the difference operation but also proportionally. Multiplicative analysis is known to offer advantages over traditional analysis, particularly in scenarios involving exponential growth and fractal dimensions, making it increasingly popular in interdisciplinary studies \cite{samuelson,afrouzi,rybaczuk}. In recent years, multiplicative arguments is incorporated into the basic concepts of almost every subject, leading to a redesign of fundamental principles \cite{boruah,bashirov,emrah1,yusuf}. Georgiev S. et al. contributions is instrumental in accelerating this process, particularly in the field of differential geometry \cite{svetlin,svetlin2,svetlin3}. Georgiev S. work are introduced basic definitions and theorems regarding curves, surfaces and manifolds thereby catalyzing advancements in geometry research. With Georgiev S. et al. work as a reference, studies in the field of differential geometry have experienced accelerated progress \cite{karacan,hasan,hasan2,hasan3,aykut5,aykut6}.

By measuring changes proportionally in multiplicative space, all traditionally known metric concepts undergo transformation when employing multiplicative arguments. In this space, numerous metric concepts crucial in the realm of differential geometry, such as angle, length, norm, etc., are redefined based on the properties of the exponential function. The objective of this article is to explore the most prominent curve pairs in the theory of curves, namely Bertrand, Mannheim curve pairs, utilizing new multiplicative metric concepts. Under the influence of multiplicative arguments, these special pairs of curves undergo re-characterization. Furthermore, relevant examples and figures are provided to facilitate a better understanding of the topic.

%%%%%%%%%%%%%%%%%%%%%%%
\section{Multiplicative Calculus and Multiplicative Space} \label{sec2}
%%%%%%%%%%%%%%%%%%%%%%%
In this section, basic definitions and theorems will be given about the multiplicative space created by choosing the generator exponential function $(exp)$. Generator function $\alpha$ is chosen as $(exp)$ function. Georgiev S. books will be used for the basic informations given in this section \cite{svetlin,svetlin2,svetlin3}.
\begin{eqnarray*}
  \alpha&:&\r\rightarrow \r^+\quad \quad \quad \quad \quad \quad \quad \alpha^{-1}:\r^+\rightarrow \r\\
  &&a\rightarrow \alpha(a)=e^a \quad \quad\quad \quad \quad \quad \quad b\rightarrow \alpha^{-1}(b)=\log b.
\end{eqnarray*}
By choosing the generator $(exp)$ function as, a function is defined from real numbers to the positive side of real numbers. Thus, the set of real numbers in the multiplicative space is defined as follows
\begin{equation*}
\r_*=\{exp(a):a\in\r\}=\r^+.  
\end{equation*}
Similarly, positive and negative multiplicative numbers are as follows
\begin{equation*}
\r^{+}_*=\{exp(a):a\in\r^+\}=(1,\infty)  
\end{equation*}
and
\begin{equation*}
\r^{-}_*=\{exp(a):a\in\r^-\}=(0,1).  
\end{equation*}
Additionally, with the help of the function $exp$, the basic operations in the multiplicative space can be seen from Table \ref{n}, for all $a,b\in\r_*$, $b\neq 1$
\begin{equation*}
\begin{tabular}{|l|l|l|}
	\hline
	$a+_*b$  & $e^{\log a+\log b}$ & $ab$ \\
	\hline
	$a-_*b$ & $e^{\log a-\log b}$ & $\frac{a}{b}$ \\
	\hline
	$a\cdot_*b$ & $e^{\log a\log b}$ & $a^{\log b}$ \\
	\hline
	$a/_*b$ & $e^{\log a/\log b}$ & $a^{\frac{1}{\log b}} $  \label{n}\\
	\hline
\end{tabular} 
\end{equation*}
\centerline{\textbf{Table 2.} Basic multiplicative operations.}
Given by Table \ref{n}, a multiplicative structure is formed by the field $(\r_*, +_*,\cdot_* )$. Each element of the space $\r_*$ is referred to as a multiplicative number and is denoted by $a_*\in\r_*$, where $a_* = exp(a)$. For the sake of simplicity in notation, we will denote multiplicative numbers as $a\in \r_*$ instead of $a_*$ in the rest of the study. In addition, the unit elements of multiplicative addition and multiplication operations are $0_*=1$ and $1_*=e$, respectively.

Now some useful operations on the multiplicative space will be given. Multiplicative space is defined based on the absolute value multiplication operation.  Since distance is an additive change in Newtonian (additive) space, the absolute value is defined as additive.  However, since distance is a multiplicative change in multiplicative space, the multiplicative absolute value is as follows
\begin{equation*}
 \left \vert a \right \vert_*=\left\{ 
\begin{array}{ll}
a, & a\geq 0_*\quad \text{or}\quad a\in [ 1,\infty ) \\ 
-_*a, & a< 0_* \quad\text{or}\quad a \in (0,1).%
\end{array}%
\right.
\end{equation*} 
where $-_*a=1/a.$ Also in the multiplicative space we have,
\begin{equation*}
  a^{k*}=\underbrace{a\cdot_*a\cdot_*a\cdot_*...\cdot_*a}_{k-times}=e^{(\log a)^k} 
\end{equation*}
for $a\in\r_*$. Moreover for $k\in\r$, we have
\begin{equation*}
  a^{\frac{1}{2}*}=e^{(\log a)^\frac{1}{2}}=\sqrt[*]{a}. 
\end{equation*}
The inverses of multiplicative addition and multiplication in multiplicative space are as follows, respectively
$$
-_* a=1/a,\quad a^{-1_*}=e^{\frac{1}{\log a}}.
$$
We have the following formulas for $a,b\in\r_*$
\begin{equation}
    (a+_*b)^{2*}=a^{2*}+_*e^2\cdot_*a\cdot_*b+_*b^{2*},\label{kartop}
\end{equation}
\begin{equation}
    a^{2*}-_*b^{2*}=(a+_*b)\cdot_*(a-_*b).\label{karfar}
\end{equation}
The vector definition in $n$-dimensional multiplicative space $\r_*^n$ is given by
\begin{equation*}
 \r_*^n =\{(x_1 ,..., x_n) : x_i \in\r_*, i\in{1,2,3,...,n}\}.   
\end{equation*}
$\r_*^n$ is a vector space on $\r_*$ with the pair of operations
\begin{eqnarray*}
\u +_* \v &=&(u_1 +_* v_1 ,..., u_n +_* v_n)=(u_1  v_1 ,..., u_n  v_n), \\
a\cdot_*\u &=&(a \cdot_* u_1,..., a \cdot_* u_n)=(e^{\log a \log u_1},...,e^{\log a \log u_n} )=e^{\log a\log \u}, \quad a \in \r_*.
\end{eqnarray*}
where $\u,\v\in\r_*^n$ and $\log\u=(\log u_1,...,\log u_n).$

The symbol $\E^{2}_*$ will be used to denote the set $\r^2_*$ equipped with the multiplicative distance $d_*$.
Let $\u$ and $\v$ be any two multiplicative vectors in the multiplicative vector space $\r_*^n$. The multiplicative inner product of the $\u$ and $\v$ vectors is
\begin{equation*}
 \langle \u,\v\rangle_*=e^{\langle \log\u,\log\v\rangle}. 
\end{equation*}
Moreover, if the multiplicative vectors $\u$ and $\v$ are perpendicular to each other, a relation can be given as
\begin{equation*}
 \langle \u,\v\rangle_*=0_*. 
\end{equation*}
For a vector $\u\in\r_*^n$, the multiplicative norm of $\u$ is defined as follows,
\begin{equation*}
    \|\u\|_*=e^{\langle \log\x,\log\x\rangle^{\frac{1}{2}}}.
\end{equation*}
  The  multiplicative cross product of $\u$ and $\v$ in $\r_*^3$ is defined by
\begin{equation*}
 \u \times_* \v = (e^{\log u_2\log v_3 -\log u_3\log v_2},e^{\log u_3\log v_1 -\log u_1\log v_3}, e^{\log u_1\log v_2 -\log u_2\log v_1}) .
\end{equation*}
The multiplicative cross product exhibits the typical algebraic and geometric properties. For instance, $\mathbf{u} \times_* \mathbf{v}$ is multiplicative orthogonal to both $\mathbf{u}$ and $\mathbf{v}$. Additionally, $\mathbf{u} \times_* \mathbf{v = 0_*}$ if and only if $\mathbf{u}$ and $\mathbf{v}$ are multiplicative collinear. For example, let's consider the multiplicative vectors $\u=(e^5,e^3,-_*e^2)$ and $\v=(e^4,e^2,e^{13})$. Here can easily be calculated to be $\u\times_*\v$. These $\u$, $\v$ and $\u\times_*\v$ multiplicative vectors are formed into a multiplicative orthogonal system to each other  \cite{svetlin}. In Fig. \ref{fig1}, we present the graph of the multiplicative orthogonal system.
\begin{figure}[hbtp]
\begin{center}
\includegraphics[width=.28\textwidth]{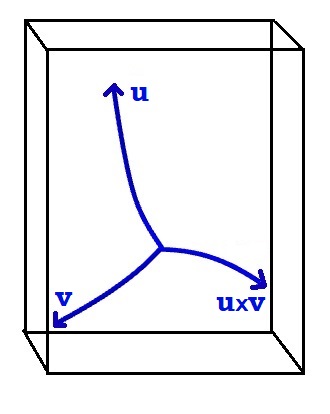}\\
\end{center}
\caption{\text{Multiplicative orthogonal system} \label{fig1}}
\end{figure}

Let $\u$ and $\v$ represent two unit direction multiplicative vectors in the multiplicative vector space. Let's denote by $\theta$ the multiplicative angle between the multiplicative unit vectors $\u$ and $\v$, so $\theta$ as follows
\begin{equation*}
    \theta=\arccos_*(e^{\langle \log\u,\log\v\rangle}).
\end{equation*}
For $\theta\in\r_*$, the definitions multiplicative sine, multiplicative cosine, multiplicative tangent and multiplicative cotangent are as follows
\begin{eqnarray*}
\sin_*\theta&=&e^{\sin\log\theta},\quad\quad\cos_*\theta=e^{\cos\log\theta},\\
\tan_*\theta&=&e^{\tan\log\theta},\quad\quad\cot_*\theta=e^{\cot\log\theta}.
\end{eqnarray*}
In addition, multiplicative trigonometric functions provide multiplicative trigonometric relations in parallel with classical trigonometric relations. For example, there is the equality $\sin_*^{2*}\theta+\cos_*^{2*}\theta=1_*$. For other relations, see \cite{svetlin}.

Let the function $f$ be given in multiplicative space $\r_*$, where $x\in I\subset\r_*$. The multiplicative derivative of $f$ is denoted by $f^*(x)$ and is as follows
\begin{eqnarray*}
f^*(x)&=&\lim_{h\rightarrow 0_*}(f(x+_*h)-_*f(x))/_*h\\
&=&\lim_{h\rightarrow 1}\left(\frac{f(xh)}{f(x)}\right)^{1/\log h}\\
&=&\lim_{h\rightarrow 1}e^{\frac{\log\frac{f(xh)}{f(x)}}{\log h}}.
\end{eqnarray*}
If the L' Hospital's rule applies here, we get
$$
f^*(x)=e^{\frac{xf'(x)}{f(x)}}.
$$
In addition, if the function $f$ is differentiable in the multiplicative sense and continuous, it is called * (multiplicative) differentiable function. It also satisfies the multiplicative derivative Leibniz and the chain rule (see details \cite{svetlin}).

The definition of the multiplicative integral is given as the inverse operator of the multiplicative derivative. The  multiplicative indefinite integral of the function $f(x)$ is defined by 
\begin{equation*}
 \int_* f(x)\cdot_*d_*x=e^{\int\frac{1}{x}\log f(x)dx},\quad x\in\r_*.
\end{equation*}
%%%%%%%%%%%%%%%%%%%%%%%
\section{Differential Geometry of Curves in Multiplicative Space} \label{sec2}
%%%%%%%%%%%%%%%%%%%%%%%
Let the $\x$ multiplicative vector valued function defined in the open interval $I\in\r_*$ be given $ \x:I\in\r_*\rightarrow\E_*^3,
s\mapsto\x(s)=(x_1(s),x_2(s),x_3(s)).$ The multiplicative component functions of $\x$ can be $k$-times continuously multiplicative differentiable. In this case, $\x$ is called of class $C_*^k$ ($k \geq 1_*$). Withal a differentiable $\x$ multiplicative is called a curve in multiplicative Euclidean space $\E_*^3$. In particular, a parametric multiplicative curve $\x$  is regular if and only if $\| \x^*(s) \|_* \neq 0_*$ for any $s \in I$. The expression $\x^*(s)$ gives us the multiplicative velocity function of $\x$. In addition, if $\| \x^*(s) \|_*=1$ for every $s\in\r_*$, then $\x$ is a multiplicative naturally parametrized curve. The multiplicative Frenet trihedron of a multiplicative  naturally parameterized curve $\x(s)$ are
$$\t(s)=\x^*(s),\quad \quad  \n (s)= \x^{**}(s)/_*\| \x^{**}(s)\|_*,\quad  \quad\b (s)= \t (s) \times_* \n (s).$$
\quad The vector field $\t(s)$ (resp. $\n(s)$ and $\b(s)$) along $\x(s)$ is said to be {\it multiplicative tangent} (resp. {\it multiplicative principal normal} and {\it multiplicative binormal}). It is direct to prove that $\{\t (s), \n (s), \b (s) \}$ is mutually multiplicative orthogonal and $\n (s) \times_* \b (s) =\t (s) $ and $\b (s) \times_* \t (s) =\n (s) $. We also point out that the arc length parameter and multiplicative Frenet frame are independent from the choice of multiplicative parametrization \cite{svetlin2}. 

The multiplicative Frenet formulae of $\x$ are given by
\begin{eqnarray*}
    \t^*&=&\kappa \cdot _* \n=e^{\log\kappa\log\n},\\
    \n^*&=&-_*\kappa \cdot_* \t +_* \tau \cdot_* \b=e^{-\log\kappa\log\t+\log\tau\log\b},\\
    \b^*&=&-_* \tau \cdot_* \n=e^{-\log\tau\log\n},
\end{eqnarray*}
where $\kappa=\kappa(s)$ and $\tau=\tau(s)$ are the curvature and the torsion functions of $\x$, calculated by
\begin{eqnarray*}
    \kappa(s)&=&\| \x^{**}(s)\|_*=e^{\langle \log\x^{**},\log\x^{**}\rangle^{\frac{1}{2}}},\\
    \tau(s)&=&\langle \n^*(s),\b(s) \rangle_* =e^{\langle \log\n^*(s),\log\b(s)\rangle}.
\end{eqnarray*}
Has A. et al. conducted a study on helices in multiplicative space and derived the general multiplicative helices equation as \cite{aykut}
\begin{equation}
 \tau/_*\kappa=c,\quad c\in\r_*. \label{hel1}  
\end{equation}
Furthermore, in the same study, the multiplicative slant helix equation is provided as \cite{aykut}
\begin{equation}
\sigma=\left(\kappa^{2*}/_*(\kappa^{2*}+_*\tau^{2*})^{\frac{3}{2}*}\right)\cdot_*(\tau/_*\kappa)^*  \label{hel2} 
\end{equation}
where $\sigma=c,\quad c\in\r_*$.
Evren M.E. et al examined spherical curves and obtained the following characterization \cite{evren}
\begin{equation*}
r=\left ( (e^{-1}/_*\kappa(s))^{2_*}+_* ((e^{-1}/_*\kappa(s))^*/_*\tau(s)))^{2_*}) \right )^{\frac{1}{2}_*}
\end{equation*}
or equivalently,
\begin{equation}
r^{2*}=p^{2*}+_*(p^*\cdot_*q)^{2*}\label{kur1}
\end{equation}
where $p=e/_*\kappa(s)$ and $q=e/_*\tau(s)$. Also multiplicative Frenet curve $\x$ is a multiplicative spherical curve if and only if 
\begin{equation}
(p^*\cdot_*q)^{*}+_*p/_*q=0_*.\label{kur2}
\end{equation}
Again in the study, Evren M. E. et al. are studied multiplicative rectifying curves and obtained the following characterization
\begin{equation*}
 \tau^{\frac{1}{\log\kappa}}=e^{\log a\log s+\log b} 
\end{equation*}
or multiplicative arguments
\begin{equation}
 \tau/_*\kappa=a\cdot_*s+_*b \label{rec}  
\end{equation}
where $a,b\in\r_*$. Additionally, some basic geometric concepts have been obtained. Evren M. E. et al. introduced the multiplicative sphere and circle \cite{evren}. Additionally, Has and Yılmaz examined multiplicative planes and conics. In Fig. \ref{fig2} shows the multiplicative unit circle and sphere whose centers is the multiplicative origin $O_1=(0_*,0_*)$ and $O_2=(0_*,0_*,0_*)$, respectively.
\begin{figure}[hbtp]
\begin{center}
\includegraphics[width=.45\textwidth]{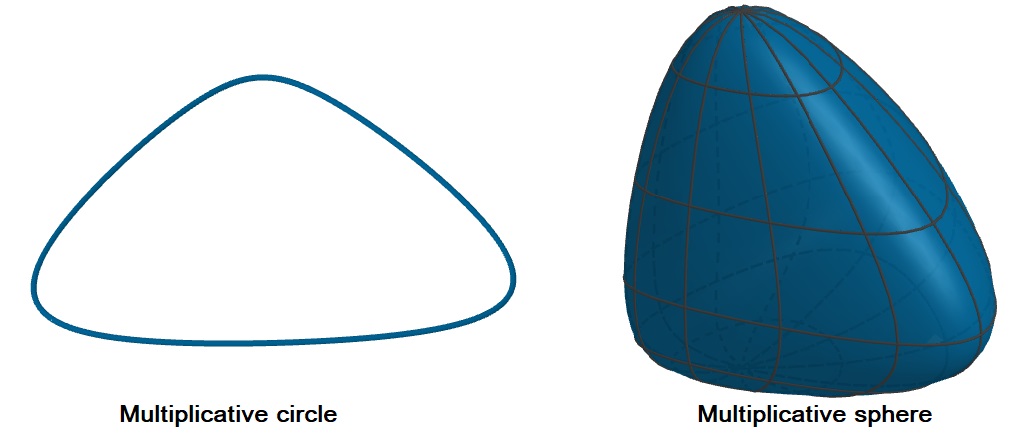}\\
\end{center}
\caption{\text{Multiplicative unit circle and sphere} \label{fig2}}
\end{figure}

We have also given the multiplicative plane spanned by multiplicative lines and whose equation is $e^3\cdot_*x-_*e^{-2}\cdot_*y+_*z-e^5=0_*$ in Fig. \ref{fig3}.
\begin{figure}[hbtp]
\begin{center}
\includegraphics[width=.45\textwidth]{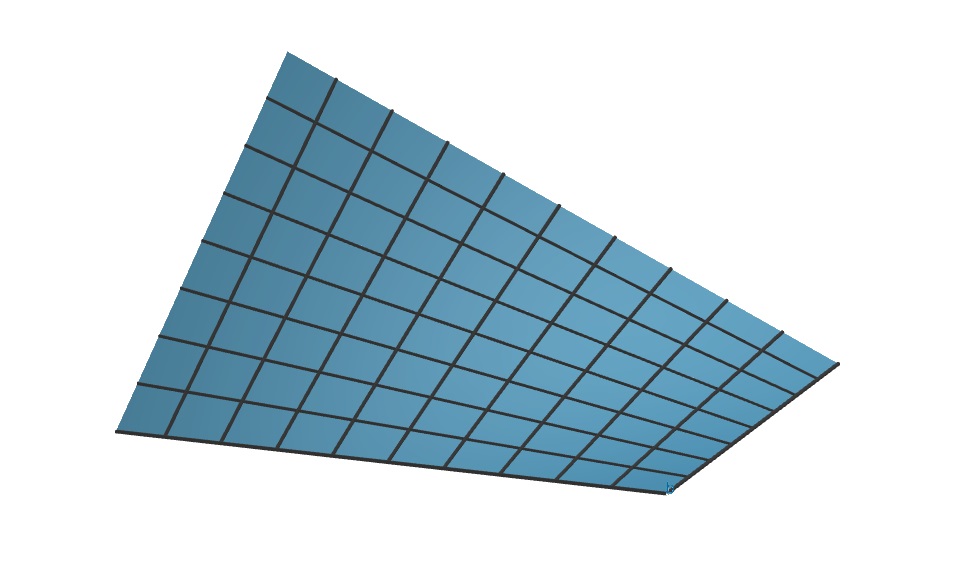}\\
\end{center}
\caption{\text{Multiplicative plane} \label{fig3}}
\end{figure}
\begin{conclusion}
Multiplicative analysis has some advantages over traditional (non-Newtonian) analysis. It is not possible to interpret the geometric interpretation of some exponential expressions with the help of traditional analysis. More clearly, consider the following subset of $\r^2$.
$$
C=\{(x,y) \in  \r^2: (\log x)^2+(\log y)^2=1, x,y>0 \}.
$$ 
We also can parameterize this set as $x(t)=e^{\cos (\log t)}$ and $y(t)=e^{\sin (\log 
t)}$, $t>0$. If we use the usual arithmetic operations, derivative and integral, then it would not be easy to understand what the set $C$ expresses geometrically. With or without the help of computer programs, we cannot even calculate its basic invariants, e.g., the arc length function $s(t)$ is given by a complicated integral
$$
s(t)=\int^t \frac{1}{u}\left ((\sin(\log u)e^{\cos (\log u)})^2+(\cos(\log u)e^{\sin (\log u)})^2 \right )^{1/2}du.
$$ 
However, applying the multiplicative tools, we see that $C$ is indeed a multiplicative circle parameterized by the multiplicative arc length whose center is $(1,1)$ and radius $e$, which is one of the simplest multiplicative curves. This is the reason why, in some cases, the multiplicative tools need to be applied instead of the usual ones \cite{evren}. \label{conc}
\end{conclusion}
%%%%%%%%%%%%%%%%%%%%%%%%%%%%%%%%%%%%%%%%%%%%%%%%%%%%
\section{Multiplicative partner curves}
In this section, Bertrand and Mannheim curve pairs, which are popular curve pairs in differential geometry, will be characterized with the help of multiplicative arguments.
\subsection{Multiplicative Bertrand partner curves}
\begin{definition}
 Let $\x:I\in\r_*\rightarrow\E_*^3$ be the multiplicative curve in multiplicative Euclidean space $\E_*^3$. The multiplicative curve $\y$  with a multiplicative principal normal equal to the multiplicative principal normal of the $\x$ is called the Bertrand partner curve of the $\x$. In other words, let the multiplicative Frenet vectors of $\x$ and $\y$ be $\{\t,\n,\b\}$ and $\{\bar{\t},\bar{\n},\bar{\b}\}$, respectively. In case $\n=\bar{\n}$, $\y$ is called the multiplicative Bertrand partner curve of $\x$ and where the multiplicative angle between $\t$ and $\bar{\t}$ is $\theta.$
 \begin{figure}[hbtp]
\begin{center}
\includegraphics[width=.45\textwidth]{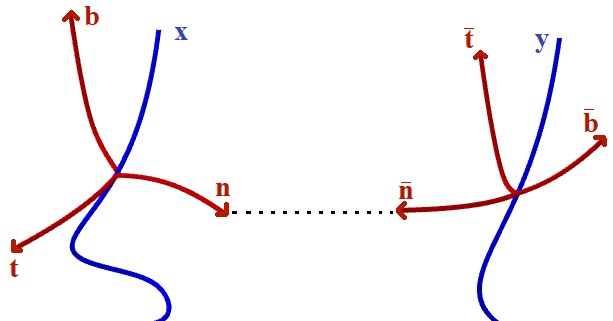}\\
\end{center}
\caption{\text{Multiplicative Bertrand partner curve and their multiplicative Frenet vectors.} \label{fig3}}
\end{figure}
\end{definition}
\begin{conclusion}
Let $\x$ and $\y$ be a pair of multiplicative Bertrand partner curves with multiplicative Frenet vectors $\{\t,\n,\b\}$ and $\{\bar{\t},\bar{\n},\bar{\b}\}$ in the multiplicative Euclidean space, respectively. The following equation exists between $\x$ and $\y$ with multiplicative arguments
\begin{equation}
 \y(s)=\x(s)+_*\lambda\cdot_*\n(s).\label{8}
\end{equation}
\end{conclusion}
\begin{remark}
Let $\x$ and $\y$ be a pair of multiplicative Bertrand partner curves with multiplicative Frenet vectors $\{\t,\n,\b\}$ and $\{\bar{\t},\bar{\n},\bar{\b}\}$ in the multiplicative Euclidean space, respectively. The following relations exist between the multiplicative Frenet vectors of $\x$ and $\y$ \label{remark}
\begin{eqnarray*}
\bar{\t}&=&\cos_*\theta\cdot_*\t-_*\sin_*\theta\cdot_*\b,\\
\bar{\b}&=&\sin_*\theta\cdot_*\t+_*\cos_*\theta\cdot_*\b.
\end{eqnarray*}
\end{remark}
\begin{proposition}
Let $\x$ and $\y$ form a multiplicative Bertrand partner curve in the multiplicative Euclidean space. In this case, $\lambda$ in Eq. \eqref{8} is a multiplicative constant. 
\begin{proof}
 Taking a multiplicative differentiation in Eq. \eqref{8}, we obtain
 \begin{equation*}
  \y^*=\x^*+_*\lambda^*\cdot_*\n+_*\lambda\cdot_*(-_*\kappa\cdot_*\t+_*\tau\cdot_*\b)   
 \end{equation*}
 and so
  \begin{equation}
  \bar{\t}=(e-_*\lambda\cdot_*\kappa)\cdot_*\t+_*\lambda^*\cdot_*\n+_*\lambda\cdot_*\tau\cdot_*\b. \label{9}
 \end{equation}
Since $\n$ and $\bar{\n}$ are multiplicative linear dependent, from Eq. \eqref{9}, we get
\begin{equation*}
    e^{\langle\log\bar{\t},\log\bar{\n}\rangle}=(e-_*\lambda\cdot_*\kappa)\cdot_*e^{\langle\log\bar{\n},\log\t\rangle}+_*\lambda^*\cdot_*e^{\langle\log\bar{\n},\log\n\rangle}+_*\lambda\cdot_*\tau\cdot_*e^{\langle\log\bar{\n},\log\b\rangle}
\end{equation*}
and
\begin{equation*}
    e^{0}=e^{\log\frac{e}{\lambda}\log\kappa\log e^0+\log\lambda^*+\log\lambda\log\tau\log e^0}.
\end{equation*}
Finally, let this equation be arranged in the light of multiplicative arguments, as follow
\begin{equation*}
    0_*=\lambda^*.
\end{equation*}
Take the multiplicative integral of both sides
\begin{eqnarray*}
    \int_*\lambda^*\cdot_*d_*s&=&\int_*0_*\cdot_*d_*s\\
    e^{\int\frac{1}{s}\log e^{\frac{s\lambda'}{\lambda}}ds}&=&e^{\int\frac{1}{s}\log e^{0}ds}\\
    e^{\int\frac{\lambda'}{\lambda}ds}&=&e^c\\
    e^{\log\lambda}&=&e^c\\
    \lambda&=&e^c, c\in\r.
\end{eqnarray*}
Thus, it is proven that $\lambda\in\r_*$.
\end{proof}
\end{proposition}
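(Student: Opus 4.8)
The plan is to differentiate the position relation \eqref{8} once and then read off a first-order equation for $\lambda$ from the Bertrand hypothesis.

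First I would take the multiplicative derivative of \eqref{8} with respect to the arc-length parameter $s$ of $\x$. Applying the multiplicative Leibniz rule to $\lambda\cdot_*\n$, using $\x^*=\t$ (since $\x$ is naturally parametrised), and substituting the multiplicative Frenet formula $\n^*=-_*\kappa\cdot_*\t+_*\tau\cdot_*\b$, together with $\t=e\cdot_*\t$ and multiplicative distributivity of a multiplicative scalar over the frame, yields
\[
\y^*=(e-_*\lambda\cdot_*\kappa)\cdot_*\t+_*\lambda^*\cdot_*\n+_*\lambda\cdot_*\tau\cdot_*\b .
\]
This is the expansion of $\y^*$ in the multiplicative orthonormal frame $\{\t,\n,\b\}$ of $\x$.

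Next I would use that the multiplicative unit tangent $\bar\t$ of $\y$ is a positive multiplicative scalar multiple of $\y^*$, namely $\bar\t=\y^*/_*\|\y^*\|_*$; in particular $\y^*$ and $\bar\t$ have the same direction. By the definition of a multiplicative Bertrand partner, $\bar\n=\n$, and since $\{\bar\t,\bar\n,\bar\b\}$ is multiplicatively orthonormal, $\bar\t$ is multiplicative orthogonal to $\bar\n=\n$; hence so is $\y^*$. Taking the multiplicative inner product of the displayed identity with $\n$ and invoking the mutual multiplicative orthonormality of $\{\t,\n,\b\}$ annihilates the $\t$- and $\b$-terms and leaves precisely $\lambda^*=0_*$.

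Finally I would integrate this multiplicatively. By the definition of the multiplicative integral, $\int_*\lambda^*\cdot_*d_*s=\int_*0_*\cdot_*d_*s$ unwinds to $e^{\int(\lambda'/\lambda)\,ds}=e^{c}$ for some constant $c$, i.e. $\log\lambda=c$, so $\lambda=e^{c}\in\r_*$ does not depend on $s$ — that is, $\lambda$ is a multiplicative constant, which is the assertion. I do not expect a genuine obstacle here; the only care required is bookkeeping — applying the multiplicative product and Frenet formulas correctly and, above all, using \emph{only} $\bar\n=\n$ (not $\bar\t=\t$), so that nothing beyond the definition is smuggled in. The one point worth stating explicitly is why the $\n$-component of $\y^*$ must vanish, which rests on $\y^*\parallel\bar\t$ being multiplicative orthogonal to $\bar\n=\n$.
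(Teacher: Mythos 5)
Your proposal follows essentially the same route as the paper: multiplicatively differentiate Eq.~\eqref{8}, expand $\y^*$ in the frame $\{\t,\n,\b\}$, project onto the common principal normal $\n=\bar{\n}$ to obtain $\lambda^*=0_*$, and multiplicatively integrate to conclude $\lambda$ is a multiplicative constant. In fact you are slightly more careful than the paper at one point, noting only that $\y^*$ is a positive multiplicative multiple of $\bar{\t}$ (rather than equal to it, since $s$ need not be the arc length of $\y$), which is all the orthogonality argument requires.
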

\begin{proposition}
Let $\x$ and $\y$ be a pair of multiplicative Bertrand partner curves with multiplicative Frenet apparatus $\{\t,\n,\b\,\kappa,\tau\}$ and $\{\bar{\t},\bar{\n},\bar{\b},\bar{\kappa},\bar{\tau}\}$ in the multiplicative Euclidean space, respectively. In this case, the following equations exist
\begin{eqnarray*}
  \bar{\kappa}&=&e^{(c\log\kappa-\sin^2\log\theta)/(c-c^2\log\kappa)}\\  
  \bar{\tau}&=&e^{\sin^2\log\theta/c^2\log\tau}
\end{eqnarray*}
where $c=\log\lambda$.
\end{proposition}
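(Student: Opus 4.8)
The plan is to run the same scheme that produced the earlier results, carried one multiplicative derivative further. Starting from \eqref{8} together with the preceding proposition ($\lambda^{*}=0_{*}$), the expansion \eqref{9} collapses to $\bar{\t}=(e-_{*}\lambda\cdot_{*}\kappa)\cdot_{*}\t+_{*}\lambda\cdot_{*}\tau\cdot_{*}\b$. Equating the coefficients of this expansion with those of $\bar{\t}=\cos_{*}\theta\cdot_{*}\t-_{*}\sin_{*}\theta\cdot_{*}\b$ from Remark~\ref{remark}, in the multiplicative orthonormal frame $\{\t,\n,\b\}$, yields at once
\begin{equation*}
\cos_{*}\theta=e-_{*}\lambda\cdot_{*}\kappa,\qquad \sin_{*}\theta=-_{*}(\lambda\cdot_{*}\tau),
\end{equation*}
that is, writing $c=\log\lambda$, the logarithmic relations $\cos\log\theta=1-c\log\kappa$ and $\sin\log\theta=-c\log\tau$ (the $\n$-components on both sides vanish, in accordance with $\lambda^{*}=0_{*}$).

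Next I would multiplicatively differentiate, with respect to $s$, the two relations of Remark~\ref{remark}, using the multiplicative Frenet formulae $\t^{*}=\kappa\cdot_{*}\n$ and $\b^{*}=-_{*}\tau\cdot_{*}\n$. Since $\bar{\t}^{*}$ must be parallel to $\bar{\n}=\n$ (it is the direction of the multiplicative principal normal of $\y$, with $\bar{\t}^{*}=\bar{\kappa}\cdot_{*}\bar{\n}$, and likewise $\bar{\b}^{*}=-_{*}\bar{\tau}\cdot_{*}\bar{\n}$), the $\t$- and $\b$-components of the differentiated expansions must vanish. This forces $(\cos_{*}\theta)^{*}=(\sin_{*}\theta)^{*}=0_{*}$ — so the multiplicative Bertrand angle $\theta$ is a multiplicative constant, a fact that here drops out of the computation rather than being postulated — and leaves
\begin{equation*}
\bar{\kappa}=\cos_{*}\theta\cdot_{*}\kappa+_{*}\sin_{*}\theta\cdot_{*}\tau,\qquad \bar{\tau}=\cos_{*}\theta\cdot_{*}\tau-_{*}\sin_{*}\theta\cdot_{*}\kappa.
\end{equation*}

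It then remains to put these two expressions into the stated closed form. In logarithms they read $\log\bar{\kappa}=\cos\log\theta\,\log\kappa+\sin\log\theta\,\log\tau$ and $\log\bar{\tau}=\cos\log\theta\,\log\tau-\sin\log\theta\,\log\kappa$. For the torsion, substituting $\cos\log\theta=1-c\log\kappa$ and $\sin\log\theta=-c\log\tau$ makes the two $\log\kappa\log\tau$ terms cancel, leaving $\log\bar{\tau}=\log\tau$, which I would re-express through $c^{2}(\log\tau)^{2}=\sin^{2}\log\theta$ as $\log\bar{\tau}=\sin^{2}\log\theta/(c^{2}\log\tau)$. For the curvature, I would multiply $\log\bar{\kappa}$ through by $c\cos\log\theta=c-c^{2}\log\kappa$, substitute $c\log\tau=-\sin\log\theta$ in the resulting cross term, and apply the multiplicative Pythagorean identity $\cos_{*}^{2*}\theta+_{*}\sin_{*}^{2*}\theta=1_{*}$; the numerator then collapses to $c\log\kappa-\sin^{2}\log\theta$, giving $\log\bar{\kappa}=(c\log\kappa-\sin^{2}\log\theta)/(c-c^{2}\log\kappa)$. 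Exponentiating produces the two formulae in the statement.

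I expect this last reconciliation to be the only real obstacle: the quantities coming out of the differentiation, $\cos_{*}\theta\cdot_{*}\kappa+_{*}\sin_{*}\theta\cdot_{*}\tau$ and $\cos_{*}\theta\cdot_{*}\tau-_{*}\sin_{*}\theta\cdot_{*}\kappa$, bear no obvious resemblance to the targets, and passing from one to the other requires combining the linear relations among $\theta$, $\kappa$, $\tau$, $c$ with the Pythagorean identity in exactly the right order — multiplying through by $c\cos\log\theta$ being the step that makes the curvature formula come out cleanly. A further point worth keeping honest is that \eqref{9} is really the expansion of $\y^{*}$ rather than of the unit tangent $\bar{\t}$ itself, so the identification used above tacitly treats $\y$ as parametrised by its own multiplicative arc length; carrying the factor $\|\y^{*}\|_{*}$ explicitly through the computation and eliminating it at the end, again by the Pythagorean identity, leads to the same formulae.
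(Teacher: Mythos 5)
Your proposal is correct, but its second half takes a genuinely different route from the paper's. You agree with the paper up to the coefficient identifications $\cos_*\theta=e-_*\lambda\cdot_*\kappa$ and $\sin_*\theta=-_*(\lambda\cdot_*\tau)$ (the paper's Eqs.~\eqref{12}--\eqref{13}, obtained there by multiplicative inner products of Eq.~\eqref{9} with $\t$ and $\b$). From that point the paper does not differentiate the frame relations: it writes the reverse displacement $\x=\y-_*\lambda\cdot_*\bar{\n}$, differentiates \emph{that}, extracts the symmetric pair $e+_*\lambda\cdot_*\bar{\kappa}=\cos_*\theta$ and $-_*\lambda\cdot_*\bar{\tau}=\sin_*\theta$, and then multiplies the two cosine equations and the two sine equations to get $\cos_*^{2*}\theta=(e-_*\lambda\cdot_*\kappa)\cdot_*(e+_*\lambda\cdot_*\bar{\kappa})$ and $\sin_*^{2*}\theta=\lambda^{2*}\cdot_*\tau\cdot_*\bar{\tau}$, from which the stated closed forms drop out by direct division. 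You instead multiplicatively differentiate $\bar{\t}=\cos_*\theta\cdot_*\t-_*\sin_*\theta\cdot_*\b$ and $\bar{\b}=\sin_*\theta\cdot_*\t+_*\cos_*\theta\cdot_*\b$ via the Frenet formulae, read off $\bar{\kappa}=\cos_*\theta\cdot_*\kappa+_*\sin_*\theta\cdot_*\tau$ and $\bar{\tau}=\cos_*\theta\cdot_*\tau-_*\sin_*\theta\cdot_*\kappa$, and then reconcile these with the stated formulae using the linear relations and the Pythagorean identity $\sin_*^{2*}\theta+_*\cos_*^{2*}\theta=1_*$ (equivalently $(1-c\log\kappa)^2+c^2(\log\tau)^2=1$). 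I checked that your reconciliation is arithmetically sound: both routes reduce, on the constraint surface, to $\log\bar{\kappa}=-\log\kappa$ and $\log\bar{\tau}=\log\tau$, so the two proofs agree. Your route buys two things the paper's does not: the constancy of the multiplicative angle $\theta$ comes out as a consequence rather than an assumption, and you explicitly flag the identification of $\y^{*}$ with the unit tangent $\bar{\t}$ (i.e.\ the suppressed multiplicative factor $\|\y^{*}\|_*$), a gap the paper shares but leaves silent; the paper's route, in exchange, is shorter and stays entirely within inner-product manipulations.
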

\begin{proof}
We use take Remark \ref{remark} in Eq \eqref{9}, we get
\begin{equation}
  e^{\langle\log\bar{\t},\log\t\rangle}=(e-_*\lambda\cdot_*\kappa)\cdot_*e^{\langle\log\t,\log\t\rangle}+_*\lambda^*\cdot_*e^{\langle\log\n,\log\t\rangle}+_*\lambda\cdot_*\tau\cdot_*e^{\langle\log\b,\log\t\rangle} \label{10}
 \end{equation}
 and
 \begin{equation}
  e^{\langle\log\bar{\t},\log\b\rangle}=(e-_*\lambda\cdot_*\kappa)\cdot_*e^{\langle\log\t,\log\b\rangle}+_*\lambda^*\cdot_*e^{\langle\log\n,\log\b\rangle}+_*\lambda\cdot_*\tau\cdot_*e^{\langle\log\b,\log\b\rangle}. \label{11}
  \end{equation}
  If the necessary adjustments are made in Eqs. \eqref{10} and \eqref{11}, the following results are obtained
  \begin{eqnarray}
   e-_*\lambda\cdot_*\kappa&=&e^{\cos\log\theta} \quad \text{or}\quad  e-_*\lambda\cdot_*\kappa=\cos_*\theta \label{12}\\
  \lambda\cdot_*\tau&=&e^{-\sin\log\theta} \quad \text{or}\quad -_*\lambda\cdot_*\tau=\sin_*\theta.\label{13}
  \end{eqnarray}
  On the other hand, starting from Eq. \eqref{9}, we give
  \begin{equation}
 \x(s)=\y(s)-_*\lambda\cdot_*\bar{\n}(s).\label{14}
\end{equation}
we taking multiplicative differentiation in Eq. \eqref{14}, as follow
\begin{equation*}
  \x^*=\y^*-_*\lambda\cdot_*(-_*\kappa\cdot_*\t+_*\tau\cdot_*\b)   
 \end{equation*}
 and so
  \begin{equation}
  \t=(e+_*\lambda\cdot_*\bar{\kappa})\cdot_*\bar{\t}-_*\lambda\cdot_*\bar{\tau}\cdot_*\bar{\b}. \label{15}
 \end{equation}
 Since the positions of the multiplicative curves change, the constant multiplicative angle between $\t$ and $\bar{\t}$ becomes $-_*\theta$. Moreover, from the equations $\cos_*(-_*\theta)=e^{\cos\log(\frac{1}{\theta})}=\cos_*\theta$ and $\sin_*(-_*\theta)=e^{\sin\log(\frac{1}{\theta})}=-_*\sin_*\theta$, it is easily seen that $\cos_*\theta$ is a multiplicative even function and $\sin_*\theta$ is a multiplicative odd function. We use take Remark \ref{remark} in Eq \eqref{15}, we get
\begin{equation}
e^{\langle\log\t,\log\bar{\t}\rangle}=(e+_*\lambda\cdot_*\bar{\kappa})\cdot_*e^{\langle\log\bar{\t},\log\bar{\t}\rangle}-_*\lambda\cdot_*\bar{\tau}\cdot_*e^{\langle\log\bar{\b},\log\bar{\t}\rangle} \label{16} 
 \end{equation}
 and
\begin{equation}
e^{\langle\log\t,\log\bar{\t}\rangle}=(e+_*\lambda\cdot_*\bar{\kappa})\cdot_*e^{\langle\log\bar{\t},\log\bar{\t}\rangle}-_*\lambda\cdot_*\bar{\tau}\cdot_*e^{\langle\log\bar{\b},\log\bar{\t}\rangle}. \label{17}  
 \end{equation}
 From Eqs. \eqref{16} and \eqref{17}, we obtain
 \begin{eqnarray}
e+_*\lambda\cdot_*\bar{\kappa}&=&e^{\cos\log\frac{1}{\theta}} \quad \text{or}\quad  e+_*\lambda\cdot_*\bar{\kappa}=\cos_*\theta \label{18}\\
-_*\lambda\cdot_*\bar{\tau}&=&e^{\sin\log\frac{1}{\theta}} \quad \text{or}\quad -_*\lambda\cdot_*\bar{\tau}=\sin_*\theta.\label{19}
  \end{eqnarray}
Next, we consider Eqs. \eqref{12} and \eqref{18}, we have
\begin{equation}
\cos_*^{2*}\theta=(e-_*\lambda\cdot_*\kappa)\cdot_*(e+_*\lambda\cdot_*\bar{\kappa}).
\end{equation}
From above equation as follow
\begin{eqnarray*}
 \bar{\kappa}&=&e^{[\log e^{(\log e^{\cos\log\theta})^2/(\log e-\log\lambda\log\kappa)-\log e}]/\log\lambda} \\
 &=&e^{[\cos^2\log\theta/(1-\log\lambda\log\kappa)-1]\log\lambda}.
\end{eqnarray*}
Consider that $c=\log\lambda$ in the above equation and make the necessary multiplicative adjustments, we obtain
\begin{equation*}
  \bar{\kappa}=e^{(c\log\kappa-\sin^2\log\theta)/(c-c^2\log\kappa)}. 
\end{equation*}
Similarly, Eqs \eqref{13} and \eqref{18} can be written as follows
\begin{equation*}
\sin_*^{2*}\theta=\lambda^{2*}\cdot_*\tau\cdot_*\bar{\tau}    
\end{equation*}
and so
\begin{eqnarray*}
\bar{\tau}&=&e^{\log e^{(\log e^{\sin\log\theta})^2}/\log e^{(\log \lambda)^2}\log\tau}\\
&=&e^{\sin^2\log\theta/\log^2\lambda\log\tau}.
\end{eqnarray*}
Let's take into account the equality $c=\log\lambda$ here, so this happens
\begin{eqnarray*}
 \bar{\tau}=e^{\sin^2\log\theta/c^2\log\tau}.   
\end{eqnarray*}
\end{proof}
\begin{theorem}
 Let $\x$ and $\y$ be a pair of multiplicative Bertrand partner curves. The following equation is satisfied, with the curvatures of $\x$ being $\kappa$ and $\tau$ respectively,
 \begin{equation*}
  \kappa^{\log\lambda}+_*\tau^{\log\mu}=e.  
 \end{equation*}
\end{theorem}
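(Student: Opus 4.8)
The plan is to extract the two scalar relations already obtained in the previous proposition and combine them with a single multiplicative trigonometric identity. Recall from Eqs.~\eqref{12} and \eqref{13} that along a multiplicative Bertrand pair
$$e-_*\lambda\cdot_*\kappa=\cos_*\theta,\qquad -_*\lambda\cdot_*\tau=\sin_*\theta .$$
By the definition of a multiplicative Bertrand pair the multiplicative angle $\theta$ between $\t$ and $\bar{\t}$ is a multiplicative constant, and by the earlier Proposition $\lambda\in\r_*$ is a multiplicative constant; hence $\mu:=-_*(\lambda\cdot_*\cot_*\theta)$ is again a multiplicative constant, and this is the constant appearing in the statement. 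I would also record at the outset the identity $e^{\log a\log b}=b^{\log a}$ valid in $\r_*$, so that $\lambda\cdot_*\kappa=\kappa^{\log\lambda}$ and $\mu\cdot_*\tau=\tau^{\log\mu}$; this is what lets the $\cdot_*$-form of the final relation be rewritten in the exponent form $\kappa^{\log\lambda}+_*\tau^{\log\mu}=e$ printed in the theorem.

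First I would rewrite the first relation as $\lambda\cdot_*\kappa=e-_*\cos_*\theta$. Next, apply $\cot_*\theta\cdot_*(\,\cdot\,)$ to the second relation; using the distributivity $a\cdot_*(-_*b)=-_*(a\cdot_*b)$ together with the multiplicative identity $\cot_*\theta\cdot_*\sin_*\theta=\cos_*\theta$ (immediate from $e^{\cot\log\theta}\cdot_*e^{\sin\log\theta}=e^{\cos\log\theta}$), one gets
$$-_*\big((\lambda\cdot_*\cot_*\theta)\cdot_*\tau\big)=\cot_*\theta\cdot_*\sin_*\theta=\cos_*\theta,\qquad\text{i.e.}\qquad \mu\cdot_*\tau=\cos_*\theta .$$
Adding the two displayed expressions for $\lambda\cdot_*\kappa$ and $\mu\cdot_*\tau$ in the multiplicative sense, the $\cos_*\theta$ terms cancel and
$$\lambda\cdot_*\kappa+_*\mu\cdot_*\tau=(e-_*\cos_*\theta)+_*\cos_*\theta=e ,$$
which, after translating back via $\lambda\cdot_*\kappa=\kappa^{\log\lambda}$ and $\mu\cdot_*\tau=\tau^{\log\mu}$, is exactly $\kappa^{\log\lambda}+_*\tau^{\log\mu}=e$.

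The computation itself is short, so there is no deep obstacle; the care is entirely in the bookkeeping around $\mu$ and the multiplicative signs. One must (i) justify that $\mu$ is a genuine multiplicative constant — this rests solely on $\lambda$ being constant (earlier Proposition) and on $\theta$ being the constant Bertrand angle — and (ii) track the $-_*$ operations faithfully, using that $\sin_*$ is multiplicative odd and $\cos_*$ multiplicative even (as noted just before Eq.~\eqref{16}), since it is precisely the extra $-_*$ in $\mu=-_*(\lambda\cdot_*\cot_*\theta)$ (forced by the sign conventions of the Frenet frame used in Remark~\ref{remark}) that makes the $\cos_*\theta$ contributions annihilate. A purely cosmetic point is to present $e^{\log a\log b}=b^{\log a}$ cleanly so the reader sees why the $\cdot_*$-identity and the exponent-identity are the same statement.
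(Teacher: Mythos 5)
Your proposal is correct and takes essentially the same route as the paper: both rest on the coefficient relations of $\bar{\t}$ in the Frenet frame of $\x$ (Eqs.~\eqref{12}--\eqref{13}), the constancy of $\lambda$ and $\theta$, and absorbing $\lambda\cdot_*\cot_*\theta$ into a multiplicative constant $\mu$ to reach $\lambda\cdot_*\kappa+_*\mu\cdot_*\tau=e$, i.e.\ $\kappa^{\log\lambda}+_*\tau^{\log\mu}=e$. The only divergence is the multiplicative sign of $\mu$ (you get $\mu=-_*(\lambda\cdot_*\cot_*\theta)$, the paper writes $\mu=\lambda\cdot_*\cot_*\theta$); your version is the one consistent with Eq.~\eqref{13}, the discrepancy coming from a sign slip in the paper's ratio step, and it is immaterial since $\mu$ is only asserted to be some multiplicative constant.
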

\begin{proof}
From Eq. \eqref{9} we know that
\begin{equation*}
  \bar{\t}=(e-_*\lambda\cdot_*\kappa)\cdot_*\t+_*\lambda^*\cdot_*\n+_*\lambda\cdot_*\tau\cdot_*\b.
 \end{equation*}
 Let Remark \ref{remark} be used here, then the following can be written
 \begin{eqnarray*}
 e^{\frac{1-\log\lambda\log\kappa}{\cos\log\theta}}&=&e^{\frac{\log\lambda\log\tau}{\sin\log\theta}}
\end{eqnarray*}
 If the above equation is rearranged, the following result is obtained
 \begin{eqnarray*}
e^{\log\lambda\log\kappa+\log\lambda\log\tau \cot\log\theta}&=&\lambda\cdot_*\kappa+_*\lambda\cdot_*\cot_*\theta\cdot_*\tau\\
&=&e.
 \end{eqnarray*}
Since $\theta$ angle is a multiplicative constant angle and $\lambda$ is a multiplicative constant, we can say for $\mu\in\r_*$, $\lambda\cdot_*\cot_*\theta=\mu$. Then we express the above equation as follows
\begin{equation*}
\lambda\cdot_*\kappa+_*\mu\cdot_*\tau=e    
\end{equation*}
or equivalently
\begin{equation*}
  \kappa^{\log\lambda}+_*\tau^{\log\mu}=e.  
 \end{equation*}
\end{proof}
\begin{example}
Let $\x:I\subset\mathbb{R}_*\rightarrow E^{3}_*$ be a multiplicative naturally parameterized curve in $\mathbb{R}^{3}_*$ parameterized by
\begin{equation*}
\x(s)=\left(-_*e/_*e^{\sqrt{2}}\cdot_*\sin_*s,e/_*e^{\sqrt{2}}\cdot_*\cos_*s,e/_*e^{\sqrt{2}}\cdot_*e^s\right).
\end{equation*}
The multiplicative Bertrand curve pair of $\x(s)$ is obtained as follows,
\begin{equation*}
\y(s)=\left(e/_*e^{\sqrt{2}}\cdot_*\cos_*s,-_*e/_*e^{\sqrt{2}}\cdot_*\sin_*s,e/_*e^{\sqrt{2}}\cdot_*e^s\right).
\end{equation*}
 In Fig. \ref{fig5} we shows the multiplicative Bertrand partner curves $\x(s)$ and $\y(s)$.
\begin{figure}[hbtp]
\begin{center}
\includegraphics[width=.24\textwidth]{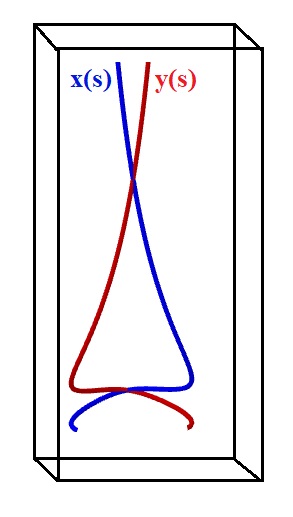}\\
\end{center}
\caption{\text{Multiplicative Bertrand partner curve.} \label{fig5}}
\end{figure}
\end{example}
\subsection{Multiplicative Mannheim partner curves}
\begin{definition}
 Let $\x:I\in\r_*\rightarrow\E_*^3$ be the multiplicative curve in multiplicative Euclidean space $\E_*^3$. The multiplicative curve $\y$  with a multiplicative binormal equal to the multiplicative principal normal of the $\x$ is called the multiplicative Manneim partner curve of the $\x$. In other words, let the multiplicative Frenet vectors of $\x$ and $\y$ be $\{\t,\n,\b\}$ and $\{\bar{\t},\bar{\n},\bar{\b}\}$, respectively. In case $\n=\bar{\b}$, $\y$ is called the multiplicative Mannheim partner curve of $\x$ and where the multiplicative angle between $\t$ and $\bar{\t}$ is $\theta.$
  \begin{figure}[hbtp]
\begin{center}
\includegraphics[width=.45\textwidth]{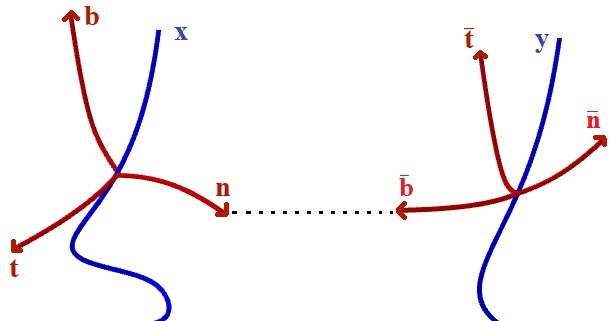}\\
\end{center}
\caption{\text{Multiplicative Mannheim partner curve and their multiplicative Frenet vectors.} \label{fig3}}
\end{figure}
\end{definition}
\begin{conclusion}
Let $\x$ and $\y$ be a pair of multiplicative Mannheim partner curves with multiplicative Frenet vectors $\{\t,\n,\b\}$ and $\{\bar{\t},\bar{\n},\bar{\b}\}$ in the multiplicative Euclidean space, respectively. The following equation exists between $\x$ and $\y$ with multiplicative arguments
\begin{equation}
 \y(s)=\x(s)+_*\lambda\cdot_*\n(s)\label{21}
 \end{equation}
 and
 \begin{equation}
 \x(s)=\y(s)+_*\mu\cdot_*\bar{\b}(s).\label{22}
\end{equation}
\end{conclusion}
\begin{remark}
Let $\x$ and $\y$ be a pair of multiplicative Mannheim partner curves with multiplicative Frenet vectors $\{\t,\n,\b\}$ and $\{\bar{\t},\bar{\n},\bar{\b}\}$ in the multiplicative Euclidean space, respectively. The following relations exist between the multiplicative Frenet vectors of $\x$ and $\y$ \label{remark1}
\begin{eqnarray*}
\bar{\t}&=&\cos_*\theta\cdot_*\t-_*\sin_*\theta\cdot_*\b,\\
\bar{\n}&=&\sin_*\theta\cdot_*\t+_*\cos_*\theta\cdot_*\b.
\end{eqnarray*}
\end{remark}
\begin{proposition}
Let $\x$ and $\y$ form a multiplicative Bertrand partner curve in the multiplicative Euclidean space. In this case, $\lambda$ and $\mu$ in Eqs. \eqref{21} and \eqref{22}, respectively is a multiplicative constant.
\end{proposition}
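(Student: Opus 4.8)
The plan is to show that both $\lambda$ and $\mu$ have vanishing multiplicative derivative and then recover a multiplicative constant by a single multiplicative integration, mirroring the preceding proposition. The whole argument rests on the defining Bertrand hypothesis $\n=\bar{\n}$: the two curves share a common multiplicative principal normal field, so pairing any frame relation multiplicatively against this shared normal annihilates the tangent and binormal contributions. Because the Bertrand relation is symmetric in $\x$ and $\y$, the treatment of $\mu$ in Eq. \eqref{22} (taken in its Bertrand form $\x=\y+_*\mu\cdot_*\bar{\n}$) will be the exact mirror image of the treatment of $\lambda$ in Eq. \eqref{21}.

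First I would differentiate $\y=\x+_*\lambda\cdot_*\n$ multiplicatively and substitute the Frenet formula $\n^*=-_*\kappa\cdot_*\t+_*\tau\cdot_*\b$ to obtain
\[
\bar{\t}=(e-_*\lambda\cdot_*\kappa)\cdot_*\t+_*\lambda^*\cdot_*\n+_*\lambda\cdot_*\tau\cdot_*\b.
\]
Taking the multiplicative inner product of both sides with $\bar{\n}=\n$ and using that $\{\t,\n,\b\}$ is multiplicative orthonormal collapses the $\t$- and $\b$-terms to $0_*$, leaving $0_*=\langle\bar{\t},\bar{\n}\rangle_*=\lambda^*$; this reproduces the earlier proposition. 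Repeating the computation for $\x=\y+_*\mu\cdot_*\bar{\n}$ with the barred Frenet formula $\bar{\n}^*=-_*\bar{\kappa}\cdot_*\bar{\t}+_*\bar{\tau}\cdot_*\bar{\b}$ gives
\[
\t=(e-_*\mu\cdot_*\bar{\kappa})\cdot_*\bar{\t}+_*\mu^*\cdot_*\bar{\n}+_*\mu\cdot_*\bar{\tau}\cdot_*\bar{\b},
\]
and pairing with $\n=\bar{\n}$ together with $\langle\t,\n\rangle_*=0_*$ isolates $\mu^*=0_*$.

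Finally I would apply the multiplicative integral to $\lambda^*=0_*$ and to $\mu^*=0_*$ exactly as in the previous proposition, where $\int_*0_*\cdot_*d_*s$ reduces the relation $s\lambda'/\lambda=0$ to $\lambda=e^{c}$; the same step gives $\mu=e^{c'}$, so both lie in $\r_*$ and are multiplicative constants. The main obstacle is bookkeeping rather than conceptual: one must ensure that the Frenet data used for $\mu$ are the barred quantities of the $\y$-curve and, crucially, that the projection is taken onto the shared normal $\n=\bar{\n}$ rather than onto $\bar{\b}$. Projecting onto $\bar{\b}$ would, via Remark \ref{remark}, mix the tangent and binormal through $\cos_*\theta$ and $\sin_*\theta$ and so fail to isolate $\mu^*$ — which is precisely why the companion relation must be read in its Bertrand form. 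A secondary point is the reparametrisation speed factor relating $\y^*$ to $\bar{\t}$, which I absorb into the frame following the paper's convention so that the multiplicative orthogonality argument applies verbatim.
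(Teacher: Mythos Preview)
Your proposal rests on the Bertrand hypothesis $\n=\bar{\n}$, but the proposition you are proving sits in the Mannheim subsection: the word ``Bertrand'' in its statement is a typo, while Eqs.~\eqref{21} and \eqref{22} are the Mannheim relations, and the operative identification is $\n=\bar{\b}$, not $\n=\bar{\n}$. Consequently your treatment of $\lambda$ happens to give the right identity $\lambda^{*}=0_*$, but for the wrong reason: the paper obtains $\langle\bar{\t},\n\rangle_*=0_*$ from $\bar{\t}\perp_*\bar{\b}$ together with $\bar{\b}\parallel_*\n$, not from $\bar{\n}=\n$.

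The real gap is in your handling of $\mu$. Equation~\eqref{22} is $\x=\y+_*\mu\cdot_*\bar{\b}$, and you are not entitled to replace $\bar{\b}$ by $\bar{\n}$; in the Mannheim setting those are distinct vectors. Differentiating the actual Eq.~\eqref{22} with $\bar{\b}^{*}=-_*\bar{\tau}\cdot_*\bar{\n}$ gives
\[
\t=\bar{\t}-_*\mu\cdot_*\bar{\tau}\cdot_*\bar{\n}+_*\mu^{*}\cdot_*\bar{\b},
\]
and now the correct projection is precisely onto $\bar{\b}$: since $\bar{\b}=\n$ one has $\langle\t,\bar{\b}\rangle_*=\langle\t,\n\rangle_*=0_*$, while $\langle\bar{\t},\bar{\b}\rangle_*=\langle\bar{\n},\bar{\b}\rangle_*=0_*$, isolating $\mu^{*}=0_*$. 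Your closing remark that ``projecting onto $\bar{\b}$ would fail to isolate $\mu^{*}$'' is therefore exactly backwards in this context, and the rewriting of \eqref{22} ``in its Bertrand form'' is not a step the proposition allows. Once $\lambda^{*}=0_*$ and $\mu^{*}=0_*$ are established with the correct hypothesis, your multiplicative-integration finish is fine and matches the paper.
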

\begin{proof}
Taking a multiplicative differentiation in Eq. \eqref{21}, we obtain
\begin{equation*}
  \y^*=\x^*+_*\lambda^*\cdot_*\n+_*\lambda\cdot_*(-_*\kappa\cdot_*\t+_*\tau\cdot_*\b)   
 \end{equation*}
 and so
  \begin{equation}
  \bar{\t}=(e-_*\lambda\cdot_*\kappa)\cdot_*\t+_*\lambda^*\cdot_*\n+_*\lambda\cdot_*\tau\cdot_*\b. \label{23}
 \end{equation}
 Since vector $\y^*$ is multiplicative orthogonal to multiplicative vector $\bar{\b}$ and $\bar{\b}$ is multiplicative linearly dependent on $\n$, as follow
\begin{equation*}
    0_*=\lambda^*.
\end{equation*}
Take the multiplicative integral of both sides
\begin{eqnarray*}
    \int_*\lambda^*\cdot_*d_*s&=&\int_*0_*\cdot_*d_*s\\
    e^{\int\frac{1}{s}\log e^{\frac{s\lambda'}{\lambda}}ds}&=&e^{\int\frac{1}{s}\log e^{0}ds}\\
    e^{\int\frac{\lambda'}{\lambda}ds}&=&e^c\\
    e^{\log\lambda}&=&e^c\\
    \lambda&=&e^c, c\in\r.
\end{eqnarray*}
Similarly, it is shown that $\mu$ is a multiplicative constant.
\end{proof}
\begin{proposition}
Let $\x$ and $\y$ be a pair of multiplicative Mannheim partner curves with multiplicative Frenet apparatus $\{\t,\n,\b\,\kappa,\tau\}$ and $\{\bar{\t},\bar{\n},\bar{\b},\bar{\kappa},\bar{\tau}\}$ in the multiplicative Euclidean space, respectively. In this case, the following equations exist
\begin{eqnarray*}
  \bar{\kappa}&=&e^{(\log\kappa)^2\cos^2\log\theta+(\log\tau)^2\sin^2\log\theta^\frac{1}{2}}\\  
  \bar{\tau}&=&e^{\log\kappa\sin\log\theta-\log\tau\cos\log\theta}
\end{eqnarray*}
where $c=\log\lambda$.
\end{proposition}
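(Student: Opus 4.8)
The plan is to imitate the method of the preceding proposition: multiplicatively differentiate the two Frenet relations of Remark~\ref{remark1}, substitute the multiplicative Frenet formulae of $\x$, and then read off $\bar\kappa$ and $\bar\tau$ by comparison with the multiplicative Frenet formulae of $\y$. Throughout I use that $\lambda$ (hence also $\mu$) is a multiplicative constant by the previous proposition, that the Mannheim condition is $\bar\b=\n$, that $\{\t,\n,\b\}$ is multiplicative orthonormal with $\t^{*}=\kappa\cdot_{*}\n$ and $\b^{*}=-_{*}\tau\cdot_{*}\n$, and — exactly as in the Bertrand computation — I normalise to $1_{*}$ the multiplicative reparametrisation factor relating the arc length of $\y$ to that of $\x$.

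For $\bar\kappa$ I start from $\bar\kappa\cdot_{*}\bar\n=\bar\t^{*}$ and differentiate $\bar\t=\cos_{*}\theta\cdot_{*}\t-_{*}\sin_{*}\theta\cdot_{*}\b$ with the multiplicative product and chain rules, using $(\cos_{*}\theta)^{*}=-_{*}\sin_{*}\theta\cdot_{*}\theta^{*}$ and $(\sin_{*}\theta)^{*}=\cos_{*}\theta\cdot_{*}\theta^{*}$, and then substitute $\t^{*}=\kappa\cdot_{*}\n$ and $\b^{*}=-_{*}\tau\cdot_{*}\n$. Taking the multiplicative norm of both sides and using $\|\bar\n\|_{*}=1_{*}$, the homogeneity of $\|\cdot\|_{*}$, $\|\t^{*}\|_{*}=\kappa$, $\|\b^{*}\|_{*}=\tau$, the multiplicative orthogonality of the Frenet frame and the multiplicative binomial formula~\eqref{kartop}, I get
\begin{equation*}
\bar\kappa^{2*}=(\kappa\cdot_{*}\cos_{*}\theta)^{2*}+_{*}(\tau\cdot_{*}\sin_{*}\theta)^{2*}.
\end{equation*}
Applying $\sqrt[*]{\ }$ and unwinding $\cos_{*}\theta=e^{\cos\log\theta}$, $\sin_{*}\theta=e^{\sin\log\theta}$, $a\cdot_{*}b=e^{\log a\log b}$, $a+_{*}b=ab$ and $a^{2*}=e^{(\log a)^{2}}$ then produces $\bar\kappa=e^{\left((\log\kappa)^{2}\cos^{2}\log\theta+(\log\tau)^{2}\sin^{2}\log\theta\right)^{1/2}}$.

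For $\bar\tau$ I use $\bar\tau=\langle\bar\n^{*},\bar\b\rangle_{*}$. Differentiating $\bar\n=\sin_{*}\theta\cdot_{*}\t+_{*}\cos_{*}\theta\cdot_{*}\b$ the same way and substituting the Frenet formulae gives $\bar\n^{*}=(\kappa\cdot_{*}\sin_{*}\theta-_{*}\tau\cdot_{*}\cos_{*}\theta)\cdot_{*}\n$ plus a multiple of $\bar\t$ coming from $\theta^{*}$. Pairing with $\bar\b=\n$ and using $\langle\n,\n\rangle_{*}=1_{*}$, $\langle\t,\n\rangle_{*}=\langle\b,\n\rangle_{*}=0_{*}$ and $\langle\bar\t,\bar\b\rangle_{*}=0_{*}$ kills the $\bar\t$-term and leaves $\bar\tau=\kappa\cdot_{*}\sin_{*}\theta-_{*}\tau\cdot_{*}\cos_{*}\theta$, that is $\bar\tau=e^{\log\kappa\sin\log\theta-\log\tau\cos\log\theta}$ after using $a\cdot_{*}b=e^{\log a\log b}$ and $a-_{*}b=a/b$.

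I expect the routine part to be the translation of the multiplicative operations $+_{*},-_{*},\cdot_{*},{}^{2*},\sqrt[*]{\ }$ into ordinary operations on exponents. The main obstacle will be the differentiation bookkeeping: applying the multiplicative chain rule correctly to $\cos_{*}\theta,\sin_{*}\theta$, dealing with the $\theta^{*}$-terms — in particular, matching the $\n$-components in $\bar\t^{*}=\bar\kappa\cdot_{*}\bar\n$ forces the multiplicative Mannheim relation $\kappa\cdot_{*}\cos_{*}\theta+_{*}\tau\cdot_{*}\sin_{*}\theta=0_{*}$, which has to be used consistently — and keeping track of (or, as in the preceding proposition, suppressing) the multiplicative reparametrisation factor between the two arc lengths. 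Once this is set up, the two claimed identities drop out of the norm and inner-product computations above.
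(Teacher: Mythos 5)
Your route is exactly the paper's: differentiate the frame relations of Remark~\ref{remark1} treating $\theta$ as a multiplicative constant, insert the multiplicative Frenet formulae of $\x$, and read off $\bar{\kappa}$ as $\|\bar{\t}^{*}\|_*$ and $\bar{\tau}$ as $\langle\bar{\n}^{*},\bar{\b}\rangle_*$ with $\bar{\b}=\n$, which yields the same two expressions the paper states. The only divergence is your side claim that the $\n$-components ``force'' $\kappa\cdot_*\cos_*\theta+_*\tau\cdot_*\sin_*\theta=0_*$: the paper never invokes this (and imposing it would collapse the very $\n$-component whose norm produces the stated $\bar{\kappa}$), so it should be dropped rather than ``used consistently''; otherwise the computation matches the paper's, including its implicit treatment of $\cos_*\theta\cdot_*\t^{*}$ and $\sin_*\theta\cdot_*\b^{*}$ as multiplicatively orthogonal contributions.
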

\begin{proof}
 Suppose that $\x:I\in\r_*\rightarrow\E_*^3$ be a multiplicative curve. By definition of multiplicative curvature in \cite{svetlin2}, we have 
 \begin{equation}
\bar{\kappa}=e^{\langle\log\y^{**},\log\y^{**}\rangle^{\frac{1}{2}}}\quad\text{and}\quad\bar{\tau}=e^{\langle\log\bar{\n}^*,\log\bar{\b}\rangle}   \label{24}
 \end{equation}
 If the Remark \ref{remark1} is used in the first equation given above, we get
 \begin{eqnarray*}
\bar{\kappa}&=&e^{\langle\log\bar{\t}^{*},\log\bar{\t}^{*}\rangle^{\frac{1}{2}}}\\
&=&e^{((\log\kappa)^2\cos^2\log\theta+(\log\tau)^2\sin^2\log\theta)^\frac{1}{2}}
 \end{eqnarray*}
or with multiplicative arguments
\begin{equation*}
 \bar{\kappa}=(\kappa^{2*}\cdot_*\cos^{2*}_*\theta+_* \tau^{2*}\cdot_*\sin^{2*}_*\theta)^{\frac{1}{2}*}.  
\end{equation*}
On the other hand if the Remark \ref{remark1} is used in Eq. \eqref{24}, we get
\begin{eqnarray*}
\bar{\tau}&=&e^{(\log\kappa\sin\log\theta-\log\tau\cos\log\theta)\langle\log\n,\log\bar{\b}\rangle}\\
&=&e^{\log\kappa\sin\log\theta-\log\tau\cos\log\theta}
\end{eqnarray*}
or with multiplicative arguments
\begin{equation*}
 \bar{\tau}=\kappa\cdot_*\sin_*\theta-_* \tau\cdot_*\cos_*\theta.  
\end{equation*}
\end{proof}
\begin{theorem}
In order for $\x:I\in\r_*\rightarrow\E_*^3$, whose curvatures are $\kappa$ and $\tau$, to be a Mannheim curve, the following equations must be satisfied for $\lambda\in\r_*$   
\begin{equation*}
\tau^*/_*\kappa^*=-_*\cot_*\theta.
\end{equation*}
and
\begin{equation*}
 \kappa=e^{\log\lambda[(\log\kappa)^2+(\log\tau)^2]}.
\end{equation*}
\end{theorem}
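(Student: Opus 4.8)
The plan is to follow the template already used for the Bertrand theorem: express the tangent $\bar{\t}$ of the partner curve in the multiplicative Frenet frame $\{\t,\n,\b\}$ of $\x$, compare it with the expression supplied by Remark \ref{remark1}, and then extract the two identities — the first by a multiplicative differentiation, the second by an orthogonality argument.

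First I would start from Eq. \eqref{21}, $\y=\x+_*\lambda\cdot_*\n$, in which $\lambda\in\r_*$ is a multiplicative constant (so $\lambda^*=0_*$) by the proposition proved above. With $\lambda^*=0_*$, the middle term of Eq. \eqref{23} drops out and $\bar{\t}=(e-_*\lambda\cdot_*\kappa)\cdot_*\t+_*\lambda\cdot_*\tau\cdot_*\b$. Since $\{\t,\n,\b\}$ is a multiplicative orthonormal frame, comparing this with $\bar{\t}=\cos_*\theta\cdot_*\t-_*\sin_*\theta\cdot_*\b$ from Remark \ref{remark1} gives the two scalar relations $\cos_*\theta=e-_*\lambda\cdot_*\kappa$ and $\sin_*\theta=-_*\lambda\cdot_*\tau$. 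For the first assertion I would then take the multiplicative derivative of both relations; using $\lambda^*=0_*$ and the multiplicative chain rule $(\cos_*\theta)^*=-_*\sin_*\theta\cdot_*\theta^*$, $(\sin_*\theta)^*=\cos_*\theta\cdot_*\theta^*$, they become $\lambda\cdot_*\kappa^*=\sin_*\theta\cdot_*\theta^*$ and $\lambda\cdot_*\tau^*=-_*\cos_*\theta\cdot_*\theta^*$, and dividing the second by the first in the multiplicative sense (the common factors $\lambda$ and $\theta^*$ cancel) yields $\tau^*/_*\kappa^*=-_*(\cos_*\theta/_*\sin_*\theta)=-_*\cot_*\theta$.

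For the second assertion I would differentiate $\bar{\t}=\cos_*\theta\cdot_*\t-_*\sin_*\theta\cdot_*\b$ once more and use the defining property $\bar{\b}=\n$ of a multiplicative Mannheim pair: since $\bar{\t}^*$ is multiplicatively proportional to $\bar{\n}=\sin_*\theta\cdot_*\t+_*\cos_*\theta\cdot_*\b$, whose $\n$-component is $0_*$, the $\n$-component of $\bar{\t}^*$ must vanish. Substituting $\t^*=\kappa\cdot_*\n$ and $\b^*=-_*\tau\cdot_*\n$ shows that this $\n$-component is $\kappa\cdot_*\cos_*\theta+_*\tau\cdot_*\sin_*\theta$; setting it to $0_*$ and inserting $\cos_*\theta=e-_*\lambda\cdot_*\kappa$, $\sin_*\theta=-_*\lambda\cdot_*\tau$ gives $\kappa=\lambda\cdot_*(\kappa^{2*}+_*\tau^{2*})$, i.e. $\kappa=e^{\log\lambda[(\log\kappa)^2+(\log\tau)^2]}$. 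The delicate points I expect are the coefficient-matching step — it implicitly identifies $\y^*$ with the unit tangent $\bar{\t}$, so, exactly as in the Bertrand case, one must be careful that the multiplicative reparametrization factor does not interfere — together with the bookkeeping of the multiplicative operations $+_*,-_*,\cdot_*,/_*$ and of the multiplicative chain rule for $\cos_*$ and $\sin_*$ throughout; the cancellation giving $\tau^*/_*\kappa^*=-_*\cot_*\theta$ also tacitly uses $\theta^*\neq 0_*$.
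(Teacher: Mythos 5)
Your proposal is correct and follows essentially the same route as the paper: both rest on the decomposition $\bar{\t}=(e-_*\lambda\cdot_*\kappa)\cdot_*\t+_*\lambda\cdot_*\tau\cdot_*\b$, obtain the second identity by forcing the $\n$-component of $\bar{\t}^*$ to vanish (since $\bar{\n}$ lies in the multiplicative span of $\t$ and $\b$ when $\bar{\b}=\n$), and obtain the first as a ratio of the two remaining components. The only cosmetic difference is in the first identity: the paper reads off the $\t$- and $\b$-components of $\bar{\kappa}\cdot_*\bar{\n}$ from Eq.~\eqref{25} and cancels $\bar{\kappa}$, whereas you differentiate the scalar relations $\cos_*\theta=e-_*\lambda\cdot_*\kappa$ and $\sin_*\theta=-_*\lambda\cdot_*\tau$ and cancel $\theta^*$ --- equivalent computations, with your caveats ($\theta^*\neq 0_*$, the suppressed reparametrization factor) applying equally to the paper's version.
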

\begin{proof}
Let considering  calculate the multiplicative differentiating of Eq. \eqref{23} with respect to $s$ and considering $\lambda$ is multiplicative constant, we get 
\begin{equation}
\bar{\kappa}\cdot_*\bar{\n}=-_*\lambda\cdot_*\kappa^*\cdot_*\t+_*(\kappa-_*\lambda\cdot_*\kappa^{2*}-_*\lambda\cdot_*\tau^{2*})\cdot_*\n+_*\lambda\cdot_*\tau^*\cdot_*\b. \label{25}  
\end{equation}
According to Remark \ref{remark1} we can give
\begin{eqnarray*}
 \bar{\kappa}\cdot_*\sin_*\theta&=&-_*\lambda\cdot_*\kappa^*,\\
\bar{\kappa}\cdot_*\cos_*\theta&=&\lambda\cdot_*\tau^*.
\end{eqnarray*}
From the above equations we easily obtain
\begin{eqnarray*}
\tau^*/_*\kappa^*&=&-_*\cot_*\theta.
\end{eqnarray*}
On the other hand, considering that $\n$ and $\bar{\b}$ are multiplicative linear dependencies in Eq. \eqref{25}, we give
\begin{eqnarray*}
\kappa-_*\lambda\cdot_*\kappa^{2*}-_*\lambda\cdot_*\tau^{2*}&=&0_*
\end{eqnarray*}
and so 
\begin{equation*}
\kappa=^{\log\lambda[(\log\kappa)^2+(\log\tau)^2]}.
\end{equation*}
\end{proof}

\begin{example}
Let $\x:I\subset\mathbb{R}_*\rightarrow E^{3}_*$ be a multiplicative naturally parameterized curve in $\mathbb{R}^{3}_*$ parameterized by
\begin{equation*}
\x(s)=\left(-_*e^8/_*e^{5}\cdot_*\cos_*s,e^8/_*e^{5}\cdot_*\sin_*s,e^4/_*e^{5}\cdot_*e^s\right).
\end{equation*}
The multiplicative Mannheim curve pair of $\x(s)$ is obtained as follows,
\begin{equation*}
\y(s)=\left(-_*e^8/_*e^{5}\cdot_*(\sin_*s+_*\cos_*s),e^8/_*e^{5}\cdot_*(\sin_*s+_*\cos_*s),e^4/_*e^{5}\cdot_*e^s\right).
\end{equation*}
 In Fig. \ref{fig7} we shows the multiplicative Mannheim partner curves $\x(s)$ and $\y(s)$.
\begin{figure}[hbtp]
\begin{center}
\includegraphics[width=.34\textwidth]{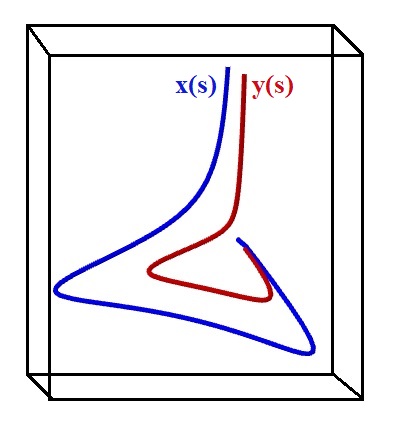}\\
\end{center}
\caption{\text{Multiplicative Mannheim partner curve.} \label{fig7}}
\end{figure}
\end{example}

\section{Conclusion}\label{sec13}
In this article, various curve pairs were scrutinized employing multiplicative arguments. The primary rationale behind incorporating multiplicative analysis in this investigation stems from its capability to resolve certain problems that are beyond the scope of traditional (Newtonian) analysis, as elucidated in Conclusion \ref{conc}. The utilization of diverse analytical methodologies, notably non-Newtonian analyses, holds significant importance in the realm of differential geometry. Such approaches not only offer alternative perspectives where conventional methods fall short but also facilitate solutions that are often more straightforward to attain. Therewithal the multiplicative space is produced with the help of the exponential function, it functions in the first quadrant of the traditional coordinate system. This transformation entails shifting the range $(-\infty,0)$ to $(0,1)$ and $(0,+\infty)$ to $(1,+\infty)$ effectively confining the multiplicative Euclidean space within the first quadrant region. Consequently, subjects analyzed using multiplicative arguments find themselves operating within a more constrained domain, facilitating a more streamlined examination process. Moreover, the proportional nature of measurements in multiplicative space enables a more efficient modeling of exponentially changing phenomena. Problems characterized by rapid exponential changes can be more effectively addressed with multiplicative arguments, allowing for a more numerical approach towards attaining real solutions compared to traditional methods.However, it's worth noting that compressing multiplicative space into the first quadrant may pose disadvantages for certain problem structures. Depending on the nature of the problems under scrutiny, this aspect warrants careful consideration to ensure comprehensive analysis and accurate outcomes.

\backmatter

\bmhead{Acknowledgements}

\section*{Declarations}

\begin{itemize}
\item \textbf{Funding:} The authors declare that no funds, grants, or other support were received during the preparation of this manuscript.
\item \textbf{Conflict of interest:}
 The authors declare that there is no conflict of interest.
\item \textbf{Ethical Approval:} It is invalid.
\item \textbf{Data availability:} It is invalid.
\end{itemize}

\end{document}